\newtheorem{thm}{Theorem}
\theoremstyle{definition}
\newtheorem{defn}[thm]{Definition}
\newtheorem{rmk}[thm]{Remark}
 \newcommand{\ql}{\ulcorner}
 \newcommand{\qr}{\urcorner}
 \newcommand{\SM}[1]{\underline{#1}}
\DeclareMathOperator{\Con}{Con}
\title{Yablo's Paradox And Arithmetical Incompleteness}
\author{Graham Leach-Krouse}
\begin{document}
\maketitle
\begin{abstract}
In this short paper, I present a few theorems on sentences of arithmetic which are related to Yablo's Paradox\footnotemark~as G\"odel's first undecidable sentence was related to the Liar paradox.\footnotemark~In particular, I consider two different arithemetizations of Yablo's sentences: one resembling G\"odel's arithmetization of the Liar, with the negation outside of the provability predicate, one resembling Jeroslow's undecidable sentence,\footnotemark~with negation inside. Both kinds of arithmetized Yablo sentence are undecidable, and connected to the consistency sentence for the ambient formal system in roughly the same manner as G\"odel and Jeroslow's sentences.

Finally, I consider a sentence which is related to the Henkin sentence ``I am provable'' in the same way that first two arithmetizations are related to G\"odel and Jeroslow's sentences. I show that this sentence is provable, using L\"ob's theorem, as in the standard proof of the Henkin sentence.
\end{abstract}
 
 \footnotetext[1]{G\"odel makes this connection in his published presentation of his result---\cite[p149]{Godel:2011vh}.}
 \footnotetext[2]{See \cite{Yablo:1993vh} for a canonical presentation} 
 \footnotetext{presented in \cite{Jeroslow:1973tz}}
 
Below, I present a few theorems on sentences of arithmetic which are related to Yablo's Paradox approximately as G\"odel's first undecidable sentence was related to the Liar paradox. Each one involves taking a sentence---G\"odel's, Jeroslow's, or Henkin's, as the superscripts suggest---which is normally thought of as self-referential, and ``unfurling'' it into a sequence of sentences which would naturally be thought of as constituting a non-well-founded chain of reference.
 
These results are of interest, first, because they tend to support G\"odel's assertion that any ``epistemological antinomy'' can be used as a guide in finding some form of mathematical incompleteness,\footnote{\cite[n14]{Godel:2011vh}} and second, because they present a striking link between the behavior of the arithmetical sentences mentioned above (G\"odel's, Henkin's, and Jeroslow's) and the behavior of their respective unfurled counterparts considered below. In the absence of a counterexample, the correspondence suggests that the pattern may extend to other cases, or may even be itself a candidate for investigation, should a reasonable formalization of the notion of unfurling become available.

\section{preliminaries}
The technology employed by the proofs below is almost entirely standard. We do require free-variable versions of the diagonal lemma, and of Hilbert-Bernays derivability conditions. The generalized derivability conditions (GD1),(GD2), (GD3) may be somewhat unfamiliar. The last two can be found in \cite{Feferman:1960tk}, and the first is easy to prove in any reasonable system. We let $\mathsf{M}$ denote any r.e. system of reasonable power (if you like, $\mathsf{PA}$).
\begin{defn} We let $\Box$ abbreviate a predicate satisfying
\begin{enumerate}
\item[(GD1)] $\mathsf{M}\vdash \phi(x) \Rightarrow \mathsf{M}\vdash \Box(\ql\phi(\dot{x})\qr)$
\item[(GD2)] $\mathsf{M}\vdash \Box(x\SM{\rightarrow} y)\rightarrow( \Box(x)\rightarrow \Box(y))$
\item[(GD3)] $\psi(y_1,\ldots,y_k)\in \Sigma_1\Rightarrow \mathsf{M}\vdash \psi(y_1,\ldots,y_k)\rightarrow\Box(\ql \psi (\dot{y}_1,\ldots,\dot{y}_k)\qr)$
\end{enumerate}
\end{defn}

\begin{rmk}[Generalized L\"ob's Theorem]\label{GLT} It follows from (GD1)-(GD3) that $M\vdash \Box(\ql\phi(\dot{x})\qr)\rightarrow\phi(x)$ implies $\mathsf{M}\vdash \phi(x)$. 
\end{rmk}
\begin{defn}
With $k$ a free variable, let
\begin{align*}
\mathsf{M}\vdash Y^J(k) &\leftrightarrow (\forall x > k)[\Box(\ql \neg Y(\dot{x})\qr)]\\
\mathsf{M}\vdash Y^G(k) &\leftrightarrow(\forall x > k)[\neg\Box(\ql Y(\dot{x})\qr)]\\
\mathsf{M}\vdash Y^H(k) &\leftrightarrow (\forall x > k)[\Box(\ql Y(\dot{x})\qr)]
\end{align*}
\end{defn}
\begin{rmk}\label{GTL} Inspecting the definitions, we see that if $x> y$, then $\mathsf{M}\vdash Y^{J/G/H}(\bar{y})\rightarrow Y^{J/G/H}(\bar{x})$.
\end{rmk}
\section{Theorems}
\begin{thm} ~
\begin{enumerate}
\item For any $k$ 
\begin{enumerate}
\item If $1$-$\Con(\mathsf{M})$, then $\mathsf{M}\nvdash  Y^J(\bar{k})$.
\item If $\Con(\mathsf{M})$, then $\mathsf{M}\nvdash \neg Y^J(\bar{k})$
\end{enumerate}
\item For any $k$
\begin{enumerate}
\item If $\Con(\mathsf{M})$, then $\mathsf{M}\nvdash Y^G(\bar{k})$
\item If $1$-$\Con(\mathsf{M})$, then $\mathsf{M}\nvdash \neg Y^G(\bar{k})$
\end{enumerate}
\item $\mathsf{M}\vdash Y^H(k)$
\end{enumerate}
\end{thm}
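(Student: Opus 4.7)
The three groups of claims call for three distinct strategies: (3) will be a direct application of the Generalized L\"ob's Theorem, while (1) and (2) will reprise, in the unfurled Yablo setting, the classical Jeroslow and G\"odel undecidability arguments.

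For (3), the plan is to apply Remark~\ref{GLT} with $\phi:=Y^H$: it suffices to establish $\mathsf{M}\vdash\Box(\ql Y^H(\dot{k})\qr)\rightarrow Y^H(k)$. Reasoning inside $\mathsf{M}$, assume $\Box(\ql Y^H(\dot{k})\qr)$ and fix an arbitrary $x>k$; the goal is $\Box(\ql Y^H(\dot{x})\qr)$. The monotonicity of Remark~\ref{GTL} is easily internalized---$\mathsf{M}$ proves $x>k\rightarrow(Y^H(k)\rightarrow Y^H(x))$ with $x,k$ free by inspection of the definition. Then (GD1) boxes this implication, (GD2) distributes the box (twice) to yield $\Box(\ql\dot{x}>\dot{k}\qr)\rightarrow(\Box(\ql Y^H(\dot{k})\qr)\rightarrow\Box(\ql Y^H(\dot{x})\qr))$, and (GD3) applied to the $\Sigma_1$ formula $x>k$ discharges the outer antecedent. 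Chaining these steps yields the desired conclusion, verifying the L\"ob premise.

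For (1a), (2a), and (2b) the plans follow the classical templates. For (2a), assuming $\mathsf{M}\vdash Y^G(\bar{k})$, instantiation at $\overline{k+1}$ gives $\mathsf{M}\vdash\neg\Box(\ql Y^G(\overline{k+1})\qr)$, while Remark~\ref{GTL} together with (GD1) yields $\mathsf{M}\vdash\Box(\ql Y^G(\overline{k+1})\qr)$---an immediate contradiction from Con alone. For (1a), the same instantiation produces $\mathsf{M}\vdash\Box(\ql\neg Y^J(\overline{k+1})\qr)$; this sentence is $\Sigma_1$, so 1-Con promotes it to $\mathsf{M}\vdash\neg Y^J(\overline{k+1})$, contradicting the $\mathsf{M}\vdash Y^J(\overline{k+1})$ supplied by Remark~\ref{GTL}. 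For (2b), the sentence $\neg Y^G(\bar{k})$ unpacks to the $\Sigma_1$ statement $\exists x>\bar{k}\,\Box(\ql Y^G(\dot{x})\qr)$; if $\mathsf{M}\vdash\neg Y^G(\bar{k})$, then 1-Con produces a standard witness $n>k$ with $\mathsf{M}\vdash Y^G(\bar{n})$, contradicting (2a) applied at~$n$.

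The main obstacle is (1b). The symmetric move used in (2b) is blocked because $\neg Y^J(\bar{k})\leftrightarrow\exists x>k\,\neg\Box(\ql\neg Y^J(\dot{x})\qr)$ is $\Sigma_2$ rather than $\Sigma_1$, and the available hypothesis is only Con. Modelled on Jeroslow's original argument, the plan is to derive $\mathsf{M}\vdash Y^J(\bar{k})$ from the supposed $\mathsf{M}\vdash\neg Y^J(\bar{k})$, thereby contradicting Con. Applying (GD1) yields $\mathsf{M}\vdash\Box(\ql\neg Y^J(\bar{k})\qr)$, which is only a single instance of the universal $\forall x>\bar{k}\,\Box(\ql\neg Y^J(\dot{x})\qr)$ demanded by $Y^J(\bar{k})$, and Remark~\ref{GTL} propagates $\neg Y^J$ only downward (to smaller indices), which is the wrong direction for direct extension. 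The delicate step---which I expect to be the real obstacle---is leveraging the free-variable biconditional defining $Y^J$, combined with (GD1)--(GD3), to promote the single boxed instance into the uniform universal required for $Y^J(\bar{k})$; a naive Löb-style maneuver is not enough, since reflection of $\Box$ past a $\Sigma_2$ formula is unavailable from Con alone.
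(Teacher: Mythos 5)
Parts (1a), (2a), (2b), and (3) are correct and take essentially the paper's route. Your L\"ob argument for $Y^H$ is a minor repackaging of the paper's: you box the internalized monotonicity $x>k\rightarrow(Y^H(k)\rightarrow Y^H(x))$ and discharge $\Box(\ql\dot{x}>\dot{k}\qr)$ via (GD3), whereas the paper pushes the box inside the definition and reads $\Box(\ql(\forall z>\dot{x})[\Box(\ql Y^H(\dot{z})\qr)]\qr)$ off as $\Box(\ql Y^H(\dot{x})\qr)$ directly; both verify the same L\"ob premise. Your (1a), (2a), and (2b) match the paper's instantiation-plus-monotonicity contradictions and its double use of $1$-consistency.

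The genuine gap is (1b), which you leave unproved, and your announced strategy for it is misdirected. You aim to derive $\mathsf{M}\vdash Y^J(\bar{k})$ from $\mathsf{M}\vdash\neg Y^J(\bar{k})$ by promoting the single boxed instance $\Box(\ql\neg Y^J(\dot{\bar{k}})\qr)$ to the universal $(\forall x>\bar{k})[\Box(\ql\neg Y^J(\dot{x})\qr)]$; no manipulation of (GD1)--(GD3) will deliver that, and no $\Sigma_2$ reflection is needed. The paper's observation is much simpler: $\neg Y^J(\bar{k})$ is, provably in $\mathsf{M}$, the statement $(\exists x>\bar{k})[\neg\Box(\ql\neg Y^J(\dot{x})\qr)]$, i.e.\ it asserts the existence of an unprovable sentence, and hence implies $\Con(\mathsf{M})$ inside $\mathsf{M}$ (an inconsistent theory provably proves everything). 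So $\mathsf{M}\vdash\neg Y^J(\bar{k})$ would yield $\mathsf{M}\vdash\Con(\mathsf{M})$, which contradicts G\"odel's second incompleteness theorem under the hypothesis $\Con(\mathsf{M})$. The whole of (1b) is thus an appeal to G2, not a fixed-point or reflection argument.
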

\begin{proof} For (1), first suppose that $\mathsf{M}\vdash Y^J(\bar{k})$. Then, evidently,  $\mathsf{M}\vdash \Box(\ql \neg Y^J(\dot{\bar{k}}+1)\qr)$, so by $1$-$\Con(\mathsf{M})$, $\mathsf{M}\vdash \neg Y^J(\bar{k}+1)$, which contradicts $Y^J(\bar{k})$, by Remark \ref{GTL}. Now, suppose that $\mathsf{M}\vdash \neg Y^J(\bar{k})$. This implies $\mathsf{M}\vdash (\exists x > \bar{k})[\neg\Box(\ql \neg Y(\dot{x})\qr)]$, which violates the second incompleteness theorem.

For (2), first suppose that $\mathsf{M}\vdash Y^G(\bar{k})$. Then, evidently $\mathsf{M}\vdash \neg\Box(\ql Y(\dot{\bar{k}}+1)\qr)$, But, also $\mathsf{M}\vdash Y^G(\bar{k}+1)$,  by Remark \ref{GTL}, and so $\mathsf{M}\vdash \Box(\ql Y^G(\dot{\bar{k}}+1)\qr)$, which violates the consistency of $\mathsf{M}$. On the other hand, suppose $\mathsf{M}\vdash \neg Y^G(\bar{k})$. Then $\mathsf{M}\vdash (\exists x > k)[\Box(\ql Y^G(\dot{x})\qr)]$. By $1$-consistency, and $\Sigma_1$-completeness we get that, for some $x$, $\mathsf{M}\vdash\Box(\ql Y^G(\dot{\bar{x}})\qr)$, and by a second application of $1$-consistency, we get that $\mathsf{M}\vdash Y^G(\bar{x})$, which is impossible, by the first part of the argument.

For (3), we aim to show that $\mathsf{M}\vdash \Box (\ql Y^H(\dot{k})\qr)\rightarrow Y^H(k)$, and appeal to the generalized L\"ob's theorem of Remark \ref{GLT}. So, assume, in $\mathsf{M}$, that $\Box(\ql Y^H(\dot{k})\qr)$, so that by (GD1),(GD2),
\begin{equation}
\Box(\ql (\forall x > \dot{k})[\Box(\ql Y^H(\dot{x})\qr)]\qr).\tag{$\gamma$}
\end{equation} In $\mathsf{M}$, fix an arbitrary $x>k$. By (GD3), and since ``$x>k$'' is a $\Sigma_1$ formula, we have $ \Box(\ql \dot{x}>\dot{k}\qr)$. Thus, by ($\gamma$) and (GD1),(GD2), we have $\Box(\ql (\forall z > \dot{x})[\Box(\ql Y^H(\dot{z})\qr)]\qr)$. So equivalently, by (GD1), (GD2), $\Box(\ql Y^H(\dot{x})\qr))$. Since $x>k$ was arbitrary, we have $(\forall x>k)[\Box(\ql Y^H(\dot{x})\qr)]$, which implies $Y^H(k)$, Discharging our assumption, we have $\mathsf{M}\vdash \Box(\ql Y^H(\dot{k})\qr)\rightarrow Y^H(k)$, whence, by the generalized L\"ob Theorem \ref{GLT}, $\mathsf{M}\vdash Y^H(k)$.
\end{proof}

\begin{thm} Let $k$ be a free variable. Then
$$\mathsf{M}\vdash \Con(\mathsf{M})\leftrightarrow Y^G(k)$$
\end{thm}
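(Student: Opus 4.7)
The plan is to prove each direction of the biconditional inside $\mathsf{M}$, formalizing the argument from part (2)(a) of the previous theorem. For the forward direction $\Con(\mathsf{M})\to Y^G(k)$, I would reason inside $\mathsf{M}$: assume $\Con(\mathsf{M})$, fix an arbitrary $x>k$, and aim at $\neg\Box(\ql Y^G(\dot{x})\qr)$ by contradiction. Supposing $\Box(\ql Y^G(\dot{x})\qr)$, the strategy is to derive both $\Box(\ql\Box(\ql Y^G(\dot{x}+1)\qr)\qr)$ and $\Box(\ql\neg\Box(\ql Y^G(\dot{x}+1)\qr)\qr)$, which together violate $\Con(\mathsf{M})$.

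For the second of these, $\mathsf{M}\vdash Y^G(x)\to\neg\Box(\ql Y^G(\dot{x}+1)\qr)$ is immediate from instantiating the defining universal of $Y^G$ at the provable fact $x+1>x$; applying GD1 and GD2 with the assumption then yields $\Box(\ql\neg\Box(\ql Y^G(\dot{x}+1)\qr)\qr)$. For the first, the formalized version of Remark \ref{GTL}, namely $\mathsf{M}\vdash Y^G(x)\to Y^G(x+1)$ (which follows from the transitivity of $>$), combined with the assumption via GD1 and GD2, yields $\Box(\ql Y^G(\dot{x}+1)\qr)$; since this is a $\Sigma_1$ formula, a final application of GD3 gives $\Box(\ql\Box(\ql Y^G(\dot{x}+1)\qr)\qr)$, completing the contradiction.

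The reverse direction $Y^G(k)\to\Con(\mathsf{M})$ is more direct. Still working inside $\mathsf{M}$, assume $Y^G(k)$ and instantiate its defining universal at $x=k+1$ to obtain $\neg\Box(\ql Y^G(\dot{k}+1)\qr)$. I would then invoke the standard formalized fact $\mathsf{M}\vdash\neg\Con(\mathsf{M})\to\Box(\ql\phi\qr)$---valid for every sentence $\phi$ because a proof of inconsistency can be extended to a proof of anything, and this extension is verifiable in $\mathsf{M}$---specialize to $\phi$ equal to $Y^G(\dot{k}+1)$, and contrapose to conclude $\Con(\mathsf{M})$.

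The main obstacle will be the bookkeeping around $\Box$ and the dot-notation, in particular verifying that the definitional unfolding of $Y^G$ and the monotonicity of Remark \ref{GTL} both survive being placed under the provability predicate. Each rests only on provable arithmetical facts about $>$, so a single application of GD1 to each underlying theorem should suffice. Notably, no appeal to $1$-consistency is required, in contrast to parts of the previous theorem, since the present argument uses only GD1--GD3 together with one application of $\Sigma_1$-completeness under a box.
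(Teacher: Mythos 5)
Your proof is correct and takes essentially the same approach as the paper: the forward direction formalizes part (2)(a) of the preceding theorem exactly as the paper does, deriving $\Box(\ql\neg\Box(\ql Y^G(\dot{x}+1)\qr)\qr)$ by instantiation under the box and $\Box(\ql\Box(\ql Y^G(\dot{x}+1)\qr)\qr)$ via the formalized monotonicity remark plus (GD3), then contradicting $\Con(\mathsf{M})$. Your reverse direction merely spells out, via $\neg\Con(\mathsf{M})\rightarrow\Box(\ql\phi\qr)$, the step the paper dismisses as ``clear,'' and your observation that $1$-consistency is not needed here is also accurate.
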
 
\begin{proof}

The right-to-left implication is clear. For left-to-right, formalize the argument of (2) above:

That is, in $\mathsf{M}$, let $x>k$ be arbitrary. Then, aiming at a refutation of $\Con(\mathsf{M})$, assume in $\mathsf{M}$ that $\Box(\ql Y^G(\dot{x})\qr)$. We then have, by (GD1), (GD2), that $\Box(\ql\neg\Box(\ql Y^G(\dot{\dot{x}}+1)\qr)\qr)$, but also $\Box(\ql Y^G(\dot{x}+1)\qr)$, from which, by (GD3), we have $\Box(\ql\Box(\ql Y^G(\dot{x}+1)\qr)\qr)$. These two together imply $\neg \Con(\mathsf{M})$. So, discharging our assumption and contraposing, $\Con(\mathsf{M})\rightarrow \neg\Box(\ql Y^G(\dot{x})\qr)$. As $x>k$ was arbitrary, we have $(\forall x>k)[\Con(\mathsf{M})\rightarrow  \neg\Box(\ql Y^G(\dot{x})\qr)]$, so by standard manipulation of quantifiers, $\Con(\mathsf{M})\rightarrow (\forall x>k)[\neg\Box(\ql Y^G(\dot{x})\qr)]$. So, $\Con(\mathsf{M})\rightarrow Y^G(k)$.
\end{proof}

\begin{thm} Let $k$ be a free variable. Then,
$$\mathsf{M}\vdash \Con(\mathsf{M})\leftrightarrow \neg Y^J(k)$$
\end{thm}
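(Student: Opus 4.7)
The plan is to prove each direction of the biconditional in turn, establishing the easy right-to-left direction first and then reusing it to derive the harder left-to-right direction via the formalized second incompleteness theorem.

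For the right-to-left direction $\neg Y^J(k)\to\Con(\mathsf{M})$, I will unfold $\neg Y^J(k)$ as $(\exists x>k)[\neg\Box(\ql\neg Y^J(\dot{x})\qr)]$, so the hypothesis exhibits some $\neg Y^J(\bar{x})$ as unprovable. Since $\mathsf{M}\vdash 0=1\to\neg Y^J(x)$, applying (GD1) and (GD2) gives $\mathsf{M}\vdash\Box(\ql 0=1\qr)\to\Box(\ql\neg Y^J(\dot{x})\qr)$, and contraposing, $\mathsf{M}\vdash\neg\Box(\ql\neg Y^J(\dot{x})\qr)\to\Con(\mathsf{M})$ for any $x$. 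The existential content of $\neg Y^J(k)$ then closes this direction, with $k$ free.

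For the harder direction $\Con(\mathsf{M})\to\neg Y^J(k)$, I will exhibit $k+1$ as a witness to the existential in $\neg Y^J(k)$. Because the previous step was established with $k$ free, it may be instantiated at $k+1$ to give $\mathsf{M}\vdash\neg Y^J(k+1)\to\Con(\mathsf{M})$; applying (GD1) and (GD2) then yields $\mathsf{M}\vdash\Box(\ql\neg Y^J(\dot{k}+1)\qr)\to\Box(\ql\Con(\mathsf{M})\qr)$. Combining with formalized second incompleteness, $\mathsf{M}\vdash\Con(\mathsf{M})\to\neg\Box(\ql\Con(\mathsf{M})\qr)$, and contraposing, one reaches $\mathsf{M}\vdash\Con(\mathsf{M})\to\neg\Box(\ql\neg Y^J(\dot{k}+1)\qr)$. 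Since $k+1>k$ is provable, this witnesses the existential form of $\neg Y^J(k)$.

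The main thing to watch will be the bookkeeping around the free variable $k$ and the numeral-term $\dot{k}+1$: the easy direction must be proved with $k$ genuinely free so the substitution $k\mapsto k+1$ is admissible and (GD1), (GD2) apply uniformly. Otherwise there is no serious obstacle. The argument is an internalization of the reasoning behind Theorem 1(1b), with formalized second incompleteness supplying the reflection needed to convert the (internally derived) provability of $\Con(\mathsf{M})$ into an outright contradiction.
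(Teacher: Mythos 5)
Your proof is correct and follows essentially the same route as the paper's: both directions hinge on pushing the implication ``$\neg Y^J$ at a point above $k$ implies $\Con(\mathsf{M})$'' under the box via (GD1), (GD2) and then invoking the formalized second incompleteness theorem. The only cosmetic difference is that you instantiate at the single witness $k+1$ and explicitly reuse the easy direction, whereas the paper argues for an arbitrary $x>k$ and then weakens the resulting universal statement to the required existential.
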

\begin{proof} The right-to-left implication is clear. For left-to-right,  let $x>k$ be arbitrary in $\mathsf{M}$, and assume in $\mathsf{M}$ that $\Box(\ql \neg Y^J(\dot{x})\qr)$. Then, by (GD1), (GD2), $\Box(\ql (\exists y > \dot{x})[\neg\Box(\ql \neg Y(\dot{y})\qr)]\qr)$, whence $\Box\Con(\mathsf{M})$. By a formalized version of G\"odel's second theorem, $\Box(\ql\Con(\mathsf{M})\qr)\rightarrow\neg\Con(\mathsf{M})$, so $\neg \Con(\mathsf{M})$ follows. Thus, discharging our assumption and contraposing, $\Con(\mathsf{M})\rightarrow \neg\Box(\ql Y^J(\dot{x})\qr)$. Since $x>k$ was arbitrary, evidently, $\Con(\mathsf{M})\rightarrow (\forall x>k)[\neg\Box(\ql \neg Y^J(\dot{x})\qr)]$, so very directly, $\Con(\mathsf{M})\rightarrow (\exists x>k)[\neg\Box(\ql \neg Y^J(\dot{x})\qr)]$, whence $\Con(\mathsf{M})\rightarrow \neg Y^J(k)$.
\end{proof}
\section{acknowledgements}
Thanks to Rafal Urbaniak and Cezary Cieslinski for some suggested simplifications, particularly for the observation that  a standard diagonal lemma, and 1-consistency were sufficient for these results. Thanks to Chris Porter for invaluable editorial advice. 
\bibliographystyle{alpha}
\bibliography{/applications/TeX/bibliographies/papers}

\end{document}